\newtheorem{theorem}{Theorem}
\newtheorem{corollary}[theorem]{Corollary}
\newtheorem{lemma}[theorem]{Lemma}
\newtheorem{conjecture}[theorem]{Conjecture}
\newtheorem{claim}[theorem]{Claim}
\theoremstyle{definition}
\newcommand\pd{\bar{\delta}^\pm}
\title{Antidirected paths in oriented graphs\thanks{This work was supported by the National Science Centre grant 2021/42/E/ST1/00193.}}
\author{
Andrzej Grzesik\thanks{Faculty of Mathematics and Computer Science, Jagiellonian University, {\L}ojasiewicza 6, 30-348 Krak\'{o}w, Poland. E-mail: {\tt andrzej.grzesik@uj.edu.pl}.}
\and
Marek Skrzypczyk\thanks{Faculty of Mathematics and Computer Science, Jagiellonian University, {\L}ojasiewicza 6, 30-348 Krak\'{o}w, Poland. E-mail: {\tt marek.skrzypczyk@doctoral.uj.edu.pl}.}
}
\date{}
\begin{document}

\maketitle

\begin{abstract}
We show that for any integer $k \ge 4$, every oriented graph with minimum semidegree bigger than $\frac{1}{2}(k-1+\sqrt{k-3})$ contains an antidirected path of length~$k$. Consequently, every oriented graph on $n$ vertices with more than $(k-1+\sqrt{k-3})n$ edges contains an antidirected path of length $k$. This 
%improves recent results of Klimošová and Stein, Chen, Hou and Zhou, and Skokan and Tyomkyn on the
asymptotically proves the antidirected path version of a conjecture of Stein and of a conjecture of Addario-Berry, Havet, Linhares Sales, Reed and Thomassé, respectively. 
\end{abstract}

\section{Introduction}
The classic results of Dirac \cite{dirac} and of Erd\H os and Gallai \cite{erdosgallai} determine optimal bounds on the minimum degree and on the number of edges that forces a graph to contain a path of a given length. In this paper, we continue recent line of research on similar conditions forcing an oriented graph to contain a path oriented in alternating directions. This is the most natural and interesting orientation of a path, for which no optimal bounds are known. 

An \emph{oriented graph} $G$ is a graph with an orientation on every edge. In particular, for every $u, v \in V(G)$ at most one of $uv$ and $vu$ is an edge of $G$. If there is an edge $uv$ in $E(G)$, then we say that it is directed from $u$ to $v$, $v$ is an \emph{out-neighbour} of $u$ and $u$ is an \emph{in-neighbour} of $v$. The \emph{in-degree} of $v$, denoted by $d^-(v)$, is the number of in-neighbours of~$v$, while the \emph{out-degree} of $v$, denoted by $d^+(v)$, is the number of out-neighbours of $v$. 
The \emph{minimum semidegree} of an oriented graph $G$ is $\delta^\pm(G)=\min\{d^-(v),d^+(v)\,:\, v\in V(G)\}$. We also define the \emph{minimum pseudo-semidegree} $\pd(G)$ of $G$ as follows. If $G$ has no edges, then $\pd(G)=0$, otherwise $\pd(G)$ is the largest integer $d$ such that each vertex has out-degree either 0 or at least $d$, and in-degree either 0 or at least $d$. Obviously always $\pd(G) \ge \delta^\pm(G)$. 

An \emph{antidirected path}, for short \emph{antipath} (\emph{antidirected cycle}, in short \emph{anticycle}), is an orientation of a path (cycle) such that for each vertex $v$ we have either $d^-(v)=0$ or $d^+(v)=0$. The \emph{length} of an oriented path (cycle) is the number of its edges. Note that there is only one antipath of a given odd length, whereas there are two non-isomorphic antipaths of any given even length. Clearly, each anticycle has even length.

Motivated by Jackson's result \cite{Jackson} for a directed path, Stein \cite{stein_tree} conjectured the following condition on the minimum semidegree forcing an oriented graph to contain any given orientation of a path. 

\begin{conjecture}[Stein \cite{stein_tree}]\label{stein}
For any integer $k \geq 1$, every oriented graph $G$ with $\delta^\pm(G) > \frac{1}{2}k$ contains each oriented path of length $k$.
\end{conjecture}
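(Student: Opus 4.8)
The plan is to prove Conjecture~\ref{stein} by a rotation--extension argument that is sensitive to the prescribed orientation pattern, combined with an absorption (or regularity) step in the near-spanning range and a tournament-type argument in the very dense range. Fix an orientation $\sigma=(\sigma_1,\dots,\sigma_k)\in\{+,-\}^k$ of $P_k$ and an oriented graph $G$ with $\delta^\pm(G)>k/2$; we may assume $k$ is large, the bounded cases being straightforward. Since each vertex then has total degree $>k$, every component of $G$ has at least $k+2$ vertices and inherits the hypothesis, so we may assume $G$ is connected. Write $n=|V(G)|$ and split according to the size of $n$ relative to $k$: a \emph{loose} range $k\le(1-\varepsilon)n$, where there is room off any $\sigma$-path; a \emph{spanning} range $k=n-o(n)$; and inside the latter an \emph{ultra-dense} range $n=k+o(k)$, where $G$ is a tournament from which a graph of maximum degree at most $n-k-2$ has been deleted.

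In the loose range I would grow a longest $\sigma$-prefix-path $v_0v_1\cdots v_\ell$ (each $v_{i-1}v_i$ oriented as~$\sigma_i$). If $\ell<k$ then the next required direction $\sigma_{\ell+1}$ forces all of the more than $k/2$ out-neighbours (or in-neighbours) of $v_\ell$ to lie in $\{v_0,\dots,v_{\ell-1}\}$, so $\ell>k/2$. The obstruction is that the classical P\'osa rotation does not preserve $\sigma$: reversing a segment $v_{j+1}\cdots v_\ell$ also reverses and flips its direction sequence, matching $\sigma$ only on windows that are ``flip-palindromic'' (every antidirected window is such, and a few others like $++--$, but not a generic window) --- this is exactly why the antidirected case solved in this paper is more tractable. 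To repair it, decompose $\sigma$, read from the active endpoint, into its maximal monotone runs $R_1R_2\cdots R_m$ and develop a \emph{pattern-preserving double rotation}: use a compatible pair of chords at $v_\ell$ chosen so that the two reversed subsegments are each flip-palindromic windows nested inside, or centred in, a single run $R_j$, so that the global direction sequence is unchanged. Since $v_\ell$ has more than $k/2$ chords into a path with at most $k-1$ internal vertices, a counting argument should yield such a pair, hence a new endpoint; iterating produces a linear-sized set $E_\ell$ of endpoints of $\sigma$-prefix-paths of the current length, after which $\delta^\pm(G)>k/2$ gives either an extension or, if all of $E_\ell$ sends its relevant edges onto the path, a short re-route through an outside vertex (available since $G$ is connected and $n>\ell+1$).

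In the spanning range I would replace the final step by absorption: pre-reserve an absorbing sub-pattern able to swallow any bounded number of leftover vertices in any prescribed local orientation, which exists because $\delta^\pm(G)>k/2$ makes almost any two vertices joinable by short monotone and short antidirected connectors (a direct neighbourhood-intersection estimate); alternatively, in this range one can instead embed $\sigma$ via an oriented regularity/blow-up argument in the reduced graph. In the ultra-dense range, where $G$ is a tournament minus a sparse graph, I would adapt the Havet--Thomass\'e median-order method: take a median order of $G$, embed $\sigma$ greedily from the middle outward using the local majority property, and repair the few missing edges by local swaps, with $\delta^\pm(G)>k/2$ providing the slack. The main obstacle I anticipate is the pattern-preserving rotation lemma: the double-rotation constraints shrink the pool of usable chords, and one must still output \emph{enough} new endpoints to drive the booster argument while matching the tight constant $k/2$. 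The worst case is a pattern whose monotone runs are all short, such as $+ + - - + + - - \cdots$, which sits closest to the antidirected case yet lacks its full flip-reversal symmetry; I expect that ``mostly antidirected'' patterns and ``has a long monotone run'' patterns require somewhat different counting, each calibrated to yield exactly $k/2$, and that reconciling the two analyses at the crossover is where the genuine difficulty lies.
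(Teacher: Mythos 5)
This statement is Conjecture~\ref{stein}, an \emph{open conjecture} that the paper does not prove; the paper only establishes Theorem~\ref{maintheorem}, an asymptotic bound for the antidirected orientation alone, with the explicitly weaker threshold $\frac{1}{2}(k-1+\sqrt{k-3})$. There is therefore no ``paper's own proof'' to compare against, and any proposal must be judged purely on whether it actually closes the conjecture.

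Your proposal does not. It is a plausible research programme, but it contains several genuine gaps, the most serious being the pattern-preserving rotation lemma which you yourself flag as the crux. The assertion that, because $v_\ell$ has more than $k/2$ chords into a path of at most $k-1$ internal vertices, ``a counting argument should yield'' a compatible pair of chords whose two reversed subsegments are flip-palindromic and nested inside single monotone runs is precisely the kind of step that a proof has to carry out, and it is far from obvious: for a generic $\sigma$ the flip-palindromic windows are sparse, the two constraints (both windows flip-palindromic \emph{and} both nested in runs \emph{and} endpoints compatible with the required edge directions) interact, and at the tight constant $k/2$ there is essentially no slack to absorb case losses. This difficulty is not an artifact of your presentation; it reflects why the conjecture remains open for general orientations, and why the paper deliberately restricts to antipaths, where the full flip-reversal symmetry is available. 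A second misconception compounds this: you write that the antidirected case is ``solved in this paper,'' but Theorem~\ref{maintheorem} is asymptotic even in the antidirected case, and Conjecture~\ref{pseudostein} (equivalently Conjecture~\ref{stein} for antipaths) is still open. So the component of your plan that you treat as a settled base case is itself unresolved.

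Beyond the main lemma, the other ranges are only outlined: the absorption/regularity step in the spanning range is described at the level of intent, and regularity-based arguments typically cost an $\varepsilon n$ error term, which is not affordable against the exact $k/2$ threshold; the median-order adaptation in the ultra-dense range is likewise sketched with ``repair the few missing edges by local swaps'' doing unspecified work. Finally, the claim that the bounded cases of the conjecture are ``straightforward'' is incorrect: the conjecture is known only for directed paths and for $k\le 4$, and even the $k\le 4$ cases in \cite{antipaths} require a dedicated argument. In short, this is not a proof of Conjecture~\ref{stein}; it is a list of sub-problems whose resolution would constitute one, with the central sub-problem (pattern-compatible rotation at the sharp threshold) left open and, in my view, likely to require a fundamentally new idea rather than a refinement of P\'osa rotation.
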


The above threshold, if true, is tight. This can be seen by noticing that a blow-up of any directed cycle with $\frac{1}{2}k$ vertices in each blob has semidegree equal to $\frac{1}{2}k$ and does not contain an antipath of length $k$. 
The conjecture is known to be true for directed paths~\cite{Jackson} and for $k \le 4$~\cite{antipaths}. 
Since the antipath is the most natural orientation for which Conjecture~\ref{stein} is open, and known extremal examples witness that not containing an antipath seems to be the most restrictive condition, attention was recently drawn to the antipaths. 
In particular, Stein and Zárate-Guerén \cite{antidirected} proved that an approximate version of Conjecture~\ref{stein} holds for antipaths in large oriented graphs on $n$ vertices if $k$ is linear in $n$. 

An interesting observation is that if we replace the minimum semidegree condition in Conjecture~\ref{stein} for antipaths by the weaker condition on the minimum pseudo-semidegree, then we obtain an equivalent conjecture. It is so, because one can combine copies of an oriented graph $G$ with its copies having reversed orientations on edges to obtain an oriented graph~$G'$ with $\delta^\pm(G') = \pd(G)$ without creating longer antipaths. Therefore, Conjecture~\ref{stein} for antipaths is equivalent to the following problem. 

\begin{conjecture}\label{pseudostein}
For any integer $k \geq 1$, every oriented graph $G$ with $\pd(G) > \frac{1}{2}k$ contains each antipath of length $k$.
\end{conjecture}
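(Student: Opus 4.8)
The plan is to attack Conjecture~\ref{pseudostein} directly, in its pseudo-semidegree form (which, as the excerpt explains, is equivalent to the antipath case of Conjecture~\ref{stein}). Fix $k$ and an antipath $A$ of length $k$; for even $k$ the two isomorphism types of $A$ are interchanged by reversing all orientations of $G$, which preserves $\pd$, so it is enough to treat one of them. Suppose $G$ has $\pd(G) > \tfrac12 k$ but contains no copy of $A$. A key preliminary remark is that the difficulty is genuinely independent of $|V(G)|$: the degree bound is measured against $k$, not $n$, so absorption and regularity machinery (which handles the regime $k = \Theta(n)$, cf.\ Stein and Zárate-Guerén) does not by itself help, and one is forced into a uniform combinatorial argument. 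I would therefore take a \emph{longest} antipath $P = p_0 p_1 \cdots p_\ell$ of the same type as $A$ and aim to show $\ell \ge k$.

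The core is a Pósa-type rotation scheme adapted to the alternating orientation. Say $p_0$ is a source-endpoint (the edge at $p_0$ leaves $p_0$); then maximality forces $N^+(p_0) \subseteq V(P)$, and since $p_0$ has out-degree at least $1$ we get $d^+(p_0) > \tfrac12 k > \tfrac12 \ell$, and symmetrically at $p_\ell$. A rotation at $p_0$ along an edge $p_0 \to p_i$ produces a new antipath on the same vertex set with new endpoint $p_{i-1}$ — but only when $p_i$ plays the role of a double-sink in $P$, i.e.\ only for indices $i$ of one parity; this parity restriction, and the way it is reshuffled by nested rotations, is the new phenomenon compared with directed or undirected paths. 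Iterating at both ends, one builds the set $S$ of vertices that can serve as a source-endpoint of a longest antipath on $V(P)$; necessarily $|S| \lesssim \tfrac12 \ell$. The endgame is a crossing argument: every $q \in S$ has more than $\tfrac12 k$ out-neighbours inside the $\le k+1$ vertices of $V(P)$, hence many out-neighbours at double-sink positions, each of which feeds a further rotation; pushing this to the limit and then pairing two achievable endpoints through a crossing pair of such edges yields either a strictly longer antipath or a copy of $A$, a contradiction. Finally I would run a stability analysis of the case where no rotation can enlarge $S$: there the out-neighbourhoods of the endpoints are forced to be essentially a single ``block'', from which $G$ is pinned down to a blow-up of a directed cycle with all blobs of size at most $\tfrac12 k$ — but such a graph has $\pd(G) \le \tfrac12 k$, contradicting the hypothesis.

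The main obstacle is precisely the quantitative control of $S$, and this is exactly where the bound proved in this paper loses the additive $\Theta(\sqrt k)$: to beat the threshold $\tfrac12 k$ exactly one must show that $S$ meets almost every double-source position of $P$, whereas a crude rotation count only guarantees a constant fraction, or a $\tfrac12 k - \Theta(\sqrt k)$ fraction. The trouble is that rotating at one endpoint changes which positions have the correct parity when viewed from the other endpoint, so the family of ``blocked'' configurations — short anticycles through a candidate endpoint and its would-be rotation partner — is not obviously laminar. I expect the proof to hinge on showing that these blocking anticycles can be organized into a nested family, so that a single global count over $V(P)$ still wins with the tight constant, together with a clean characterization of the no-rotation case matching the extremal blow-ups. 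If that characterization has sporadic exceptions, the stability step, rather than the rotation step, would become the bottleneck.
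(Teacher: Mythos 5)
The statement you were asked to prove is Conjecture~\ref{pseudostein}, which the paper does \emph{not} prove; it is presented as an open problem, and the paper's actual contribution (Theorem~\ref{maintheorem}) replaces the conjectured threshold $\tfrac12 k$ by the strictly weaker $\tfrac12(k-1+\sqrt{k-3})$. So there is no ``paper's own proof'' of this statement to compare against, and no proposal that honestly stops where yours does could be judged complete.

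Your write-up is a plan, not a proof, and you say so yourself: the paragraph beginning ``The main obstacle is precisely the quantitative control of $S$'' concedes exactly the gap that keeps the conjecture open. That is not a technicality to be filled in later; it \emph{is} the theorem. The rotation scheme you describe is genuinely in the same spirit as the machinery the paper uses for its weaker bound --- Lemma~\ref{lem:Fsize} is a single round of exactly the endpoint rotations you propose, producing the sets the paper calls $S$ and $P$, and the parity obstruction you flag (rotations are only available at double-sink positions) is precisely why the paper's Lemma~\ref{lemma1} first forces $m$ odd. But the paper deliberately does not iterate the rotations to saturation; instead it takes the set $F$ of outside in-neighbours of the second vertex and runs a direct edge count against $V(A)$ using Claims~\ref{cla:forbiddenpairs} and~\ref{cla:edgesfromF}, which is what produces the $\sqrt{k}$ loss. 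Your proposal to ``push this to the limit'' with nested rotations and then close via a crossing pair is an appealing strengthening, but you give no mechanism for tracking how each rotation reshuffles the parity pattern at the opposite end, and you acknowledge that the family of blocking anticycles ``is not obviously laminar.'' Without a concrete invariant controlling that reshuffling, the count on $|S|$ does not close.

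A second, independent gap is the stability step. You assert that if no rotation enlarges $S$, then $G$ ``is pinned down to a blow-up of a directed cycle with all blobs of size at most $\tfrac12 k$.'' Nothing in the sketch supports this; it is a characterization of the extremal configurations that, to my knowledge, is not known, and the no-rotation case could in principle admit other near-extremal structures (e.g.\ unions of such blow-ups glued at cut vertices, or graphs where the endpoint neighbourhoods concentrate on two blocks rather than one). If the rotation count only gives $|S| \ge \tfrac12 k - \Theta(\sqrt k)$, as you estimate, then you have reproduced the paper's bound by a different route rather than proving the conjecture, and the stability argument would have to do all of the remaining work --- which is not written down. As it stands, the proposal identifies the right obstacle but does not overcome it.
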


Klimo\v{s}ov\'{a} and Stein \cite{antipaths} proved that for $k \geq 3$ the stronger bound $\pd(G) > \frac{3}{4}k-\frac{3}{4}$ implies that an oriented graph $G$ contains each antipath of length $k$. This was further improved by Chen, Hou and Zhou~\cite{chen} to approximately $\frac{2}{3}k$ and by Skokan and Tyomkyn~\cite{skokantyomkyn} to $\frac{5}{8}k$. 

In this paper, we improve the minimum pseudo-semidegree bound to a value that agrees with the conjectured $\frac{1}{2}k$ threshold up to a $\mathcal{O}(\sqrt{k})$ error term. 

\begin{theorem}\label{maintheorem}
For any integer $k\ge 4$, every oriented graph $G$ with $\pd(G) > \frac{1}{2}(k-1+\sqrt{k-3})$ contains each antipath of length $k$. 
\end{theorem}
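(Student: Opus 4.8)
The plan is to argue by contradiction: take $G$ with $\pd(G) > \frac12(k-1+\sqrt{k-3})$ that contains no antipath of length $k$, and derive a contradiction. Following the approach of the earlier works (Klimošová–Stein, Skokan–Tyomkyn), I would consider a \emph{longest} antipath $P$ in $G$, say of length $\ell < k$, and study how its endpoints can be extended or rerouted. The key structural feature of an antipath is that its two endpoints are each either a ``source'' (out-degree $2$ on $P$, or out-degree $1$ if $\ell=1$... in general both neighbours outgoing) or a ``sink'' on $P$; write $P = v_0 v_1 \dots v_\ell$ and note $v_0$ and $v_\ell$ are each a local source or local sink. By maximality, every out-neighbour (resp. in-neighbour) of a source-endpoint (resp. sink-endpoint) already lies on $P$, and moreover lies on $P$ in a constrained position, otherwise we could extend or build a longer antipath by a rotation/rerouting argument. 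So all roughly $\pd(G)$ out- or in-neighbours of each endpoint are crammed onto the $\ell+1 < k+1$ vertices of $P$.

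The heart of the argument is a more careful rotation analysis that upgrades this crude count. I would fix, say, a source-endpoint $v_0$; each out-neighbour $v_i$ of $v_0$ on $P$ gives a potential ``rotation'': the walk $v_0 v_i v_{i-1} \dots$ or $v_0 v_i v_{i+1} \dots$, and whichever of $v_{i-1}, v_{i+1}$ keeps the alternating pattern becomes a new endpoint of an antipath of the same length $\ell$. Applying maximality again to these new endpoints, and tracking parities along $P$ (even vs. odd antipaths behave slightly differently at the endpoints, which is why $k\ge 4$ and why the two non-isomorphic even antipaths must be handled), one gets that the in-neighbourhoods of all these rotated endpoints are also confined to $P$ in restricted positions. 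Counting edges between a cleverly chosen set $S$ of ``reachable endpoints'' and the vertices of $P$ in two ways — from below using $\pd(G) \ge d := \lceil \frac12(k-1+\sqrt{k-3}) \rceil + \tfrac12$ minus a bounded correction, and from above using that $P$ has only $\ell \le k-1$ edges and each vertex of $P$ has bounded degree on $P$ — should yield an inequality of the shape (number of forced edges) $> $ (available edges), provided $d$ exceeds the stated threshold. The $\sqrt{k-3}$ term is exactly what one expects to appear: the double-counting produces a quadratic inequality in $d$ of the form $d^2 - (k-1)d + (\text{const depending on the number of rotations, }\sim \tfrac14) \le 0$, whose roots are $\tfrac12\bigl(k-1 \pm \sqrt{(k-1)^2 - 4\cdot(\text{const})}\bigr) = \tfrac12\bigl(k-1 \pm \sqrt{k-3+o(1)}\bigr)$.

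The main obstacle, and the place where real work is needed, is making the rotation/rerouting step tight enough: one must show that the rotated endpoints genuinely contribute \emph{distinct} forbidden positions on $P$, so that the ``available edges'' count is not overwhelmed by overlaps, and one must handle the boundary cases where a rotation lands near $v_0$ or $v_\ell$ or where the parity of $\ell$ forces the new endpoint to be a sink rather than a source (so its \emph{in}-neighbourhood, not out-neighbourhood, is the constrained set). Keeping the bookkeeping clean through the two even antipath types, and ensuring the constant in the quadratic is optimised to give precisely $k-3$ under the radical rather than something weaker, is where I expect the bulk of the technical effort to go; the rest is the double-counting inequality and solving the quadratic.
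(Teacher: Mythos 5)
Your high-level outline --- take a longest antipath of length $m<k$, use maximality to confine certain neighbourhoods to the path, double-count edges, and land on a quadratic whose roots are $\frac{1}{2}(k-1\pm\sqrt{k-3})$ --- does match the shape of the paper's argument. But the concrete mechanism is missing, and the step you yourself flag as ``where the bulk of the technical effort goes'' is exactly where the paper has to do something specific that your sketch does not supply.

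The gap, concretely: your rotations produce new endpoints that already lie \emph{on} $P$, and ``in-neighbourhoods of rotated endpoints confined to $P$'' only gives $\pd(G)\le m+1\le k$, nowhere near the threshold. The paper's essential move is different. Lemma~\ref{lem:Fsize} shows (after rerouting if necessary) that there is a longest antipath $A=v_0v_1\cdots v_m$ with $v_0v_1\in E(G)$ such that $v_1$ has at least $\pd(G)-\frac{1}{2}k$ in-neighbours \emph{outside} $V(A)$. Those vertices, together with $v_0$, form a set $F$ with $|F|\ge \pd(G)-\frac{1}{2}k+1$; each $w\in F$ yields an alternative longest antipath $wv_1v_2\cdots v_m$, so all out-neighbours of every $w\in F$ lie in $V(A)$. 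This is what makes the count quadratic: the size of $F$ is itself linear in the slack $\pd(G)-\frac{1}{2}k$. The other missing ingredient is the upper bound on edges from $F$ to $A$: Claim~\ref{cla:forbiddenpairs} shows that if distinct $v,w\in F$ satisfy $vv_{2i}\in E(G)$ and $wv_{2i+1}\in E(G)$, then $wv_{2i-1}\notin E(G)$ and $wv_{2i}\notin E(G)$ (otherwise one reroutes to a longer antipath), and Claim~\ref{cla:edgesfromF} packages this over blocks of four consecutive vertices to bound the edge count by $\frac{k}{2}|F|+\frac{k-4}{4}$. Comparing with the lower bound $\pd(G)|F|$ gives $\bigl(\pd(G)-\frac{k}{2}\bigr)\bigl(\pd(G)-\frac{k}{2}+1\bigr)\le\frac{k-4}{4}$, which is the quadratic you anticipated; but anticipating its shape is not the same as producing the set $F$ off the path and the forbidden-pairs structure that make it true.
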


As noted in \cite{antipaths} and \cite{stein_tree}, Conjecture~\ref{pseudostein} implies analogous results in related important conjectures on forcing antipaths in oriented graphs. 

In 2013 Addario-Berry, Havet, Linhares Sales, Thomassé and Reed \cite{ADDARIO} stated the following conjecture. 

\begin{conjecture}[Addario-Berry et al. \cite{ADDARIO}]\label{hip.addario}
For any integer $k \geq 1$, every oriented graph~$G$ on $n$ vertices with more than $(k-1)n$ edges contains each antidirected tree with $k$ edges.
\end{conjecture}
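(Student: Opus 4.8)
The plan is to deduce the edge-density statement from the minimum-pseudo-semidegree statement of Theorem~\ref{maintheorem}. This gives the \emph{antidirected path} case of Conjecture~\ref{hip.addario}, with the sharp constant $k-1$ relaxed to $k-1+\sqrt{k-3}$ (the best our method yields); I will then indicate why the general antidirected-tree case — the conjecture as literally stated — lies beyond path-embedding techniques and is best viewed as an open target that our reduction converts into a tree-embedding question under a pseudo-semidegree hypothesis.

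For the path case, suppose $G$ has $n$ vertices and $|E(G)|>(k-1+\sqrt{k-3})\,n$, and put $d=\tfrac12(k-1+\sqrt{k-3})$, so that $|E(G)|>2dn$. I would peel off low-degree edges as follows: as long as some vertex $v$ has $1\le d^+(v)\le d$, delete all out-edges at $v$; as long as some vertex $v$ has $1\le d^-(v)\le d$, delete all in-edges at $v$. Each such step removes at most $\lfloor d\rfloor$ edges; since degrees only decrease and the relevant degree becomes $0$ after a step, a fixed vertex can trigger an out-deletion at most once and an in-deletion at most once, so the process terminates after removing at most $2\lfloor d\rfloor n\le 2dn<|E(G)|$ edges. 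The resulting subdigraph $H\subseteq G$ therefore still has an edge, and every vertex of $H$ has out-degree $0$ or at least $\lfloor d\rfloor+1$, and in-degree $0$ or at least $\lfloor d\rfloor+1$; hence $\pd(H)\ge\lfloor d\rfloor+1>d=\tfrac12(k-1+\sqrt{k-3})$. Since $k\ge4$, Theorem~\ref{maintheorem} applied to $H$ yields an antidirected path of length $k$ in $H$, and hence in $G$ — exactly the ``consequently'' assertion of the abstract.

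The genuine obstacle is upgrading ``antidirected path'' to ``arbitrary antidirected tree''. The peeling argument is orientation-agnostic and still applies: from $|E(G)|>2(k-1)n$ it extracts a nonempty $H$ with $\pd(H)>k-1$, so Conjecture~\ref{hip.addario} reduces to showing that $\pd(H)>k-1$ forces every antidirected tree with $k$ edges — the tree analogue of Conjecture~\ref{pseudostein}, which is open even in the minimum-semidegree formulation. The proof of Theorem~\ref{maintheorem} is intrinsically one-dimensional: it grows a partially embedded antipath from its two ends while only having to control a bounded interface of already-used vertices, whereas a general tree has an unbounded frontier of branch vertices, each of which (according to its orientation) simultaneously demands many still-unused out-neighbours or in-neighbours. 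A plausible line of attack would be to locate inside $H$ a large part on which \emph{both} $d^+$ and $d^-$ are large (via a depth-first-search or fractional-degeneracy argument), embed the tree's internal sources and sinks greedily there, and absorb the pendant leaves using a preselected reservoir; the sticking point — and the reason the conjecture is still open — is that forcing every branch vertex into a set that is simultaneously out-rich and in-rich is nearly obstructed by the extremal blow-ups of directed cycles. The concrete deliverable of this proposal is thus the antipath case proved above, with the general tree statement recorded as a reduction to a pseudo-semidegree tree-embedding problem.
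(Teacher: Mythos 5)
The statement you were asked to address is Conjecture~\ref{hip.addario} itself, which the paper states without proof --- it is an open conjecture of Addario-Berry, Havet, Linhares Sales, Thomass\'e and Reed, not a result established in the paper. You correctly recognize this and do not claim to prove it. What you actually establish is Corollary~\ref{cor:addario}, the antipath case with the relaxed constant $k-1+\sqrt{k-3}$, which is precisely what Theorem~\ref{maintheorem} yields. Your derivation is correct and is in substance the same as the paper's: where the paper simply cites Stein and Z\'arate-Guer\'en for the reduction from an edge-count hypothesis to a pseudo-semidegree hypothesis, you reconstruct that reduction explicitly with the peeling argument. The accounting is sound --- as long as some vertex has out-degree (respectively in-degree) between $1$ and $d$, delete its out-edges (respectively in-edges); since degrees only decrease and drop to $0$ after a step, each vertex triggers each type of deletion at most once, so at most $2\lfloor d\rfloor n \le 2dn < |E(G)|$ edges are removed, and the surviving nonempty subgraph $H$ has $\pd(H)\ge \lfloor d\rfloor+1 > d = \tfrac12(k-1+\sqrt{k-3})$, so Theorem~\ref{maintheorem} applies. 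Your closing discussion, explaining why the general antidirected-tree case lies beyond the one-dimensional path-extension technique of this paper and remains open even under a pseudo-semidegree hypothesis, is accurate commentary and consistent with the paper's framing of Conjecture~\ref{hip.addario} as an open target that Theorem~\ref{maintheorem} only resolves asymptotically for antipaths.
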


Stein and Zárate-Guerén \cite{antidirected} showed that any oriented graph on $n$ vertices with more than $cn$ edges contains an oriented graph $G$ with $\pd(G) > \frac{1}{2}c$. This means that any improvement in the bound for the minimum pseudo-semidegree forcing an oriented graph to contain antipaths is providing an improvement on the bound for the edge number forcing to contain antipaths. Therefore, Theorem~\ref{maintheorem} implies the following corollary asymptotically solving Conjecture~\ref{hip.addario} for antipaths. 

\begin{corollary}\label{cor:addario}
For any integer $k \geq 4$, every oriented graph $G$ on $n$ vertices with more than $(k-1+\sqrt{k-3})n$ edges contains each antipath of length $k$.
\end{corollary}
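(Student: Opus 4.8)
The plan is to obtain Corollary~\ref{cor:addario} from Theorem~\ref{maintheorem} by the standard reduction from an edge-density hypothesis to a minimum pseudo-semidegree hypothesis — precisely the statement of Stein and Zárate-Guerén recalled above, that every oriented graph on $n$ vertices with more than $cn$ edges contains a subgraph $H$ with $\pd(H) > \tfrac{1}{2}c$. Setting $c = k-1+\sqrt{k-3}$ and applying this to an oriented graph $G$ on $n$ vertices with more than $(k-1+\sqrt{k-3})n$ edges yields a subgraph $H$ with $\pd(H) > \tfrac{1}{2}(k-1+\sqrt{k-3})$; since $k \ge 4$, Theorem~\ref{maintheorem} then shows that $H$, and hence $G$, contains every antipath of length $k$. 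So the only thing to verify is the reduction, which I would include for completeness.

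For the reduction, fix $c = k-1+\sqrt{k-3}$ and let $d$ be the smallest integer with $d > \tfrac{1}{2}c$; then $d-1$ is an integer not exceeding $\tfrac{1}{2}c$, i.e.\ $d-1 \le \tfrac{1}{2}c$. Starting from $H := G$, repeat the following while possible: if some vertex $v$ satisfies $0 < d^+_H(v) < d$, delete all out-edges at $v$; and if some vertex $v$ satisfies $0 < d^-_H(v) < d$, delete all in-edges at $v$. Since edges are only removed, the process terminates, and at termination every vertex of $H$ has out-degree $0$ or at least $d$ and in-degree $0$ or at least $d$, so $\pd(H) \ge d > \tfrac{1}{2}c$ as soon as $H$ still has an edge. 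To see that it does, note that out-degrees and in-degrees are non-increasing under the process and stay $0$ once they reach $0$; hence for each vertex $v$, its out-edges are deleted in at most one step (removing fewer than $d$ edges) and its in-edges are deleted in at most one step (removing fewer than $d$ edges). Therefore at most $2n(d-1) \le cn < |E(G)|$ edges are removed in total, so $E(H) \ne \emptyset$, and the reduction is complete; combining it with Theorem~\ref{maintheorem} gives the corollary.

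I do not expect a genuine obstacle here: all of the difficulty has been absorbed into Theorem~\ref{maintheorem}, and what remains is bookkeeping. The single point deserving a line of care is the matching of thresholds — the inequality $d-1 \le \tfrac{1}{2}c$ is exactly what guarantees that the deletion process loses no density, so that the $(k-1+\sqrt{k-3})n$ edge bound transfers without slack to the $\tfrac{1}{2}(k-1+\sqrt{k-3})$ pseudo-semidegree bound demanded by the theorem (note this is tight already for $k=4$, where $c/2 = 2 = d-1$). Alternatively, one may simply cite the Stein–Zárate-Guerén result directly and omit the reduction, in which case the corollary is immediate from Theorem~\ref{maintheorem}.
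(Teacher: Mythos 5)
Your proof is correct and follows the same route as the paper: the paper derives the corollary by invoking the Stein--Z\'arate-Guer\'en reduction (that an oriented graph with more than $cn$ edges contains a subgraph of pseudo-semidegree exceeding $\tfrac{1}{2}c$) and then applying Theorem~\ref{maintheorem}, exactly as you do. Your spelled-out proof of the reduction via iterated deletion of low out-/in-degree neighbourhoods is a correct self-contained verification of that cited result and adds nothing that would change the argument.
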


Conjecture~\ref{hip.addario} was motivated by a more general conjecture of Burr \cite{burr} from 1980. We say that an oriented graph is \emph{$m$-chromatic} if its underlying graph has chromatic number~$m$. 

\begin{conjecture}[Burr \cite{burr}]\label{con:burr}
Every $2k$-chromatic oriented graph contains each oriented tree with $k$ edges.
\end{conjecture}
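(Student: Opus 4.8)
Burr's conjecture remains open in full generality, so what follows is a program for attacking it, together with the point at which the program currently stalls. The natural first move is the classical reduction from colouring to density: an $m$-chromatic graph is not $(m-2)$-degenerate, and hence contains a subgraph of minimum degree at least $m-1$. Applied to a $2k$-chromatic oriented graph $G$, this produces a subgraph $H$ in which every vertex $v$ satisfies $d^+_H(v)+d^-_H(v)\ge 2k-1$. One then fixes an oriented tree $T$ with $k$ edges, orders its $k+1$ vertices as $t_1,\dots,t_{k+1}$ so that each $t_i$ with $i\ge 2$ has a unique earlier neighbour (its parent in a rooting of $T$), and attempts to embed $T$ into $H$ greedily: having placed $t_1,\dots,t_{i-1}$ on distinct vertices, send $t_i$ to a neighbour of the image of its parent — an out-neighbour or an in-neighbour according to the orientation of the parent edge of $T$ — avoiding the at most $k-1$ already used vertices. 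Step $i$ succeeds as long as the image of the parent still has at least $k$ available neighbours of the required type.

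The crux is that minimum total degree $2k-1$ does not yield minimum semidegree $k$: a vertex of $H$ may have out-degree $0$, and a run of edges of $T$ oriented consistently — a directed subpath, say — forces us to leave a vertex through out-edges repeatedly. The remedy must be to pass from $G$ to a subgraph carrying more structure: ideally minimum semidegree at least $k$, or, in the pseudo-semidegree language of Theorem~\ref{maintheorem}, a subgraph with $\pd\ge k$ that still witnesses the colouring lower bound, and then to embed $T$ in blocks, switching the direction of travel exactly at the vertices of $T$ where the orientation alternates. For the antipath this is precisely the situation handled by Theorem~\ref{maintheorem}: through the edge-count translation described in the excerpt it gives the antipath case of Conjecture~\ref{con:burr} up to the $\mathcal{O}(\sqrt{k})$ error, as recorded in Corollary~\ref{cor:addario}; for the directed path it follows already from the Gallai--Roy theorem. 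For a \emph{general} orientation of $T$, however, consistently oriented runs can be long and numerous, and extracting from a merely $2k$-chromatic oriented graph a subgraph in which both large out-degrees and large in-degrees are simultaneously available along every such run is exactly what we do not know how to arrange — this is the main obstacle, and it is where the plan, as stated, does not go through.

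Accordingly, the realistic deliverables of this line of attack are twofold. First, relaxing the threshold from $2k$ to a polynomial function of $k$ removes the obstacle: with sufficient total degree one can absorb the worst-case loss of neighbours at every greedy step regardless of the orientation of $T$, recovering Burr-type statements with a non-optimal constant, in the spirit of Burr's original argument. Second, pushing that constant down to the conjectured linear value $2k$ — which is the genuine content of Conjecture~\ref{con:burr} — is attained here only for antipaths and only asymptotically, namely in the form given by Corollary~\ref{cor:addario}; the general oriented tree, and the exact constant, lie beyond the reach of the present techniques, and a new idea for converting a tight chromatic bound into a bound on both semidegrees along arbitrary monotone segments of the host seems to be needed.
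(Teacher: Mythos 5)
This statement is Burr's conjecture, which the paper records as open and does not prove; you correctly recognize that no proof exists and that the task here is to survey partial progress rather than supply an argument. Your account matches the paper's: the degeneracy reduction (an $m$-chromatic graph has a subgraph of minimum degree $\ge m-1$) combined with Corollary~\ref{cor:addario} yields the antipath case asymptotically, which is exactly the corollary the paper draws, and the general oriented-tree case with the tight $2k$ constant remains out of reach.
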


Since every graph with chromatic number $m$ contains a subgraph with minimum degree at least $m-1$, and consequently with many edges,  Corollary~\ref{cor:addario} asymptotically solves Conjecture~\ref{con:burr} for antipaths. 

\begin{corollary}
Every $(2k+2\sqrt{k-3})$-chromatic oriented graph contains each antipath of length $k$.
\end{corollary}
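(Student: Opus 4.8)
The plan is to derive this corollary directly from Corollary~\ref{cor:addario}, using the classical fact that large chromatic number forces a dense subgraph. Let $G$ be a $(2k+2\sqrt{k-3})$-chromatic oriented graph and let $m$ denote the chromatic number of its underlying graph, so $m \ge 2k+2\sqrt{k-3}$; note that $k \ge 4$ makes $m$ large, in particular $m \ge 2$. First I would recall the standard degeneracy argument: repeatedly delete from the underlying graph any vertex of degree at most $m-2$. Deleting such a vertex cannot drop the chromatic number below $m$, since a proper $(m-1)$-colouring of the remainder would extend to the deleted vertex (it has at most $m-2$ neighbours). As the one- and zero-vertex graphs have chromatic number at most $1 < m$, the process terminates at a nonempty subgraph $H_0$ of the underlying graph in which every vertex has degree at least $m-1$; in particular $|E(H_0)| \ge \tfrac12 (m-1)|V(H_0)|$.

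Next, let $H$ be the oriented graph on vertex set $V(H_0)$ whose edges are exactly the edges of $H_0$, each with the orientation it carries in $G$; this is well defined because $H_0$ is a subgraph of the underlying graph of an oriented graph, so every edge of $H_0$ corresponds to a unique directed edge of $G$. Writing $n' := |V(H_0)|$, we then have
\[
|E(H)| = |E(H_0)| \ge \frac{(m-1)n'}{2} \ge \frac{(2k+2\sqrt{k-3}-1)n'}{2} = \left(k - \tfrac12 + \sqrt{k-3}\right)n' > (k-1+\sqrt{k-3})n',
\]
where the last inequality is simply $k - \tfrac12 > k-1$. Hence Corollary~\ref{cor:addario} applies to the oriented graph $H$ and gives that $H$ contains each antipath of length $k$; since $H$ is a subgraph of $G$, so does $G$, which is the claim.

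I do not expect any real obstacle here: the only content beyond Corollary~\ref{cor:addario} is the textbook fact that an $m$-chromatic graph has a subgraph of minimum degree at least $m-1$ together with a one-line arithmetic check. The two points worth stating carefully are that passing from $G$ to the subgraph $H$ preserves edge orientations, so that the hypothesis of Corollary~\ref{cor:addario} is genuinely met, and that the resulting bound differs from the value $2k$ conjectured by Burr (Conjecture~\ref{con:burr}) only by the same $\mathcal{O}(\sqrt{k})$ term that appears in Theorem~\ref{maintheorem}.
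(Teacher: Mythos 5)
Your proof is correct and takes essentially the same approach as the paper: the paper derives this corollary in one sentence by noting that an $m$-chromatic graph contains a subgraph of minimum degree at least $m-1$ (hence with many edges) and then invoking Corollary~\ref{cor:addario}. You have simply spelled out the degeneracy argument, the orientation-preserving passage to the subgraph, and the short arithmetic check in full detail.
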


\section{Auxiliary lemmas}

First, we recall the following auxiliary lemmas proved by Klimo\v{s}ov\'{a} and Stein in \cite{antipaths}.

\begin{lemma}[Klimo\v{s}ov\'{a}, Stein \cite{antipaths}]\label{lemma1}
Let $k \ge 1$ be an integer and $G$ be an oriented graph with $\pd(G)\ge \frac{1}{2}k$ having a longest antipath of length $m$. If $m<k$, then $m$ is odd.
\end{lemma}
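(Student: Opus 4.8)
The plan is to argue by contradiction. Suppose $G$ has minimum pseudo-semidegree $\pd(G) \ge \frac{1}{2}k$, has a longest antipath $P$ of length $m < k$, and suppose for contradiction that $m$ is even. Let me think about what the structure of $P$ looks like and how to extend it.

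An antipath of even length $m$ has two endpoints that are "of the same type" — either both are sources (out-degree $0$ in... wait, let me reconsider). Actually in an antipath, each internal vertex is either a source (both edges point out) or a sink (both edges point in). For a path $v_0 v_1 \dots v_m$ oriented as an antipath, the orientation alternates, so $v_0$ and $v_m$ have the same "role" precisely when $m$ is even: say both $v_0$ and $v_m$ are such that their unique edge on $P$ points away from them (so $v_0 \to v_1$ and $v_m \to v_{m-1}$), OR both point toward them. Here's the key: there are two non-isomorphic antipaths of even length, but Lemma 1 presumably concerns *a* longest antipath, so I should handle whichever type $P$ is.

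First I would set up the extension argument at an endpoint. Say $P = v_0 v_1 \cdots v_m$ with $v_0 \to v_1$, so $v_0$ is a source on $P$. To extend $P$ to length $m+1$ keeping it antidirected, I need a new vertex $w \notin V(P)$ with $w \to v_0$ (so $w$ becomes a new sink endpoint). Since $m<k$ and $P$ is longest, no such $w$ exists, meaning every in-neighbour of $v_0$ lies on $P$. Now $d^-(v_0) \ge \pd(G) \ge \frac{1}{2}k > \frac{1}{2}m$ (using $m \le k-1$, so $\frac{1}{2}k > \frac{1}{2}m$; in fact $d^-(v_0) \ge \frac{k}{2} \ge \frac{m+1}{2}$). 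So $v_0$ has many in-neighbours among $v_1, \dots, v_m$. The standard rotation trick: if $v_0$ has an in-neighbour $v_i$ on $P$, I want to "rotate" to get a new longest antipath with a different endpoint, then try to extend that one — and if $m$ is even, I should be able to keep rotating/extending until I either lengthen $P$ or reach a contradiction with a counting bound.

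The main obstacle will be making the rotation argument respect the antidirected structure: rotating at $v_0$ using edge $v_i \to v_0$ replaces the sub-path from $v_0$ to $v_i$ by reversing it, and this only yields a valid antipath when the parity/orientation at $v_i$ is compatible. I expect the cleanest route is: look simultaneously at both endpoints $v_0$ and $v_m$ (which, since $m$ is even, are of the same source/sink type — say both sources), collect the in-neighbour sets $N^-(v_0)$ and $N^-(v_m)$, both of size $> \frac{1}{2}m$ inside $\{v_1,\dots,v_{m-1}\}$, and find indices $i<j$ with $v_i \in N^-(v_m)$ and $v_{i+1} \in N^-(v_0)$ (or a similar adjacency pattern forced by the sizes exceeding half of $m$, via a pigeonhole on complementary index sets). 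Such a pair lets one reroute $P$ into a strictly longer antipath — or into a closed antidirected walk that, combined with the surplus degree, produces an extension — contradicting maximality of $m$. The parity hypothesis $m$ even is exactly what guarantees $v_0$ and $v_m$ have matching types so that these rerouting operations produce a legal antipath; when $m$ is odd the two endpoints have opposite types and the argument breaks, which is consistent with the conclusion. I would carry out the index pigeonhole carefully (that is the one genuinely computational step) and verify that the rerouted path is antidirected and longer.
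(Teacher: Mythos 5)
The lemma is quoted from Klimo\v{s}ov\'{a} and Stein and is not reproved in this paper, so I can only assess the proposal on its own merits. There is a genuine directional error at the very start that undermines the rest of the sketch. You set up $P = v_0 v_1 \cdots v_m$ with $v_0 \to v_1$, so $v_0$ is a source endpoint, and then claim that to extend $P$ you ``need a new vertex $w \notin V(P)$ with $w \to v_0$.'' But with $w \to v_0$ and $v_0 \to v_1$, the vertex $v_0$ acquires one in-edge and one out-edge on the extended path, so it is neither a source nor a sink and the path is no longer antidirected. The correct extension at a source endpoint uses an \emph{out}-neighbour: $v_0 \to w$ makes $v_0$ a source of degree $2$ and $w$ a sink endpoint. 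The subsequent rotation step has the same problem: at a source endpoint you must follow out-edges of $v_0$, and moreover the landing vertex must be a sink of $P$ (odd index) for the rerouted walk to stay antidirected; following an in-edge $v_i \to v_0$ again breaks antidirectedness at $v_0$.

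This error propagates into the degree bound you invoke. You assert $d^-(v_0) \ge \pd(G) \ge \tfrac12 k$, but the pseudo-semidegree condition only gives a lower bound on in-degree or out-degree \emph{when that degree is positive}; a vertex may have in-degree $0$. What is actually guaranteed from $v_0 \to v_1$ is $d^+(v_0) \ge \pd(G)$, since $v_0$ has positive out-degree. So the quantity you want to use (in-neighbours of $v_0$) may be empty, while the quantity you can actually control (out-neighbours of $v_0$) is the one relevant to extension and rotation. Finally, the proposed Chv\'atal--Erd\H{o}s-style index crossing (``find $i<j$ with $v_i \in N^-(v_m)$ and $v_{i+1} \in N^-(v_0)$'') is only sketched and, even after fixing the direction to out-neighbours, needs substantial extra care: the reroute must respect the source/sink parity of every vertex along the way, which is exactly where the antidirected setting diverges from the undirected Hamiltonicity argument you are patterning this on. As it stands the sketch does not close, and the central extension step is carried out in the wrong direction.
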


\begin{lemma}[Klimo\v{s}ov\'{a}, Stein \cite{antipaths}]\label{lemma3}
Let $k \geq 1$ be an integer and $G$ be an oriented graph with $\pd(G) > \frac{1}{2}k$ containing an anticycle of length $m+1$. If $m<k$, then $G$ contains an antipath of length $m+1$.
\end{lemma}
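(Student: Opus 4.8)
The plan is to turn the anticycle $C$ into an antipath by deleting one edge and then splicing on a single pendant edge. Since anticycles have even length, write $m+1=2t$ with $t\ge 2$. From $m<k$ we get $2t\le k$, so $\pd(G)>\tfrac12 k\ge t$ and hence $\pd(G)\ge t+1$; in particular every source of $C$ has out-degree at least $t+1$ in $G$ and every sink of $C$ has in-degree at least $t+1$. We may also assume that $C$ is a longest anticycle of $G$, since a longer anticycle, after deleting one edge, is an antipath of length at least $m+1$, whose initial segment of length $m+1$ is already the object we want.

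Deleting one edge of $C$ gives an antipath $P$ of length $2t-1$ with vertex set $V(C)$; its two endpoints are a \emph{source-type} vertex $\alpha$ (the unique edge of $P$ at $\alpha$ leaves $\alpha$) and a \emph{sink-type} vertex $\beta$ (the unique edge of $P$ at $\beta$ enters $\beta$), where $\alpha$ is a source of $C$ and $\beta$ a sink of $C$. If $\alpha$ has an out-neighbour $w\notin V(C)$, then $P$ followed by the edge $\alpha\to w$ is an antipath of length $m+1$; symmetrically if $\beta$ has an in-neighbour outside $V(C)$. As we may delete whichever edge of $C$ we like, and every source (respectively every sink) of $C$ occurs as the endpoint $\alpha$ (respectively $\beta$) for a suitable choice, we are reduced to the \emph{closed} case: $N^{+}(s)\subseteq V(C)$ for every source $s$ of $C$, and $N^{-}(u)\subseteq V(C)$ for every sink $u$ of $C$.

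In the closed case the hypotheses become very restrictive on $G[V(C)]$. Let $S$ and $T$ denote the sources and sinks of $C$, so $|S|=|T|=t$; then $d^{+}(s)\ge t+1>|T|$ forces every source to have an out-neighbour in $S$, and dually every sink has an in-neighbour in $T$. Counting the edges of $G[V(C)]$ with these facts, together with the constraint that every vertex of $G$ has in-degree and out-degree in $\{0\}\cup[t+1,\infty)$, already yields a contradiction for small $t$. For general $t$ I would finish by a rotation/rerouting argument on the antipath $P$: a chord from the source-type endpoint $\alpha$ to a sink-type interior vertex of $P$ exists unless $\alpha$ dominates every other source of $C$, and reversing the corresponding initial segment of $P$ produces a new antipath of length $2t-1$ on $V(C)$ with a new (again source-of-$C$) source-type endpoint; iterating this and the dual rotation at the sink end must either force every source of $C$ to dominate all the others — impossible for $t\ge 2$, since two sources cannot each dominate the other — or expose a chord that closes up an anticycle longer than $C$, contradicting maximality. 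The closed case is the crux: it is the only place where the full strength of the pseudo-semidegree bound is needed, and the difficulty is to keep the rotations inside the class of antipaths, whose alternating orientation pattern is far more rigid than in the classical undirected Pósa argument, while extracting the final contradiction.
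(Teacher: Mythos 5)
The lemma is stated in the paper as a citation to Klimo\v{s}ov\'{a} and Stein \cite{antipaths}; the paper does not reprove it, so there is no in-paper proof to compare against and your argument must stand on its own. Your setup is sound: writing $m+1=2t$, deducing $\pd(G)\ge t+1$ from $m<k$, passing to a longest anticycle $C$, deleting an edge of $C$ to get an antipath $P$ of length $2t-1$ on $V(C)$ with a source-type endpoint $\alpha$ and a sink-type endpoint $\beta$, and noting that any out-neighbour of $\alpha$ (resp.\ in-neighbour of $\beta$) outside $V(C)$ immediately yields an antipath of length $m+1$. The reduction to the ``closed case'' (all sources of $C$ have $N^+\subseteq V(C)$, all sinks have $N^-\subseteq V(C)$) is correct, as is the observation that each source then has an out-neighbour in $S$.

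The genuine gap is the closed case itself, which you explicitly flag as ``the crux'' and leave unresolved. The rotation sketch does not close it, for two concrete reasons. First, iterating rotations only moves the source-type endpoint through \emph{some} subset of the sources of $C$; you have not shown it reaches all of them, so the dichotomy ``either every source dominates every other source, or some endpoint of a rotated antipath has a chord to an interior sink'' is not established, and the ``two sources cannot each dominate the other'' contradiction cannot be invoked. Second, the fallback ``expose a chord that closes up an anticycle longer than $C$'' cannot occur: every rotation stays inside $V(C)$, so any anticycle formed from a chord together with a sub-arc of $P$ has at most $|V(C)|=2t$ vertices and hence length at most $m+1$, i.e.\ at most the length of $C$. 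Indeed, the chords $\alpha\to p_{2j+1}$ that drive the rotation close \emph{shorter} anticycles of length $2j+2$, so maximality of $C$ gives no contradiction. As written, the argument reduces the lemma to a nontrivial claim about the closed case and then stops; it is not a complete proof.
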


The next auxiliary lemma will allow to modify the start or the end of a longest antipath. 

\begin{lemma}\label{lem:Fsize}
Let $k \geq 1$ be an integer and $G$ be an oriented graph with $\pd(G) \ge \frac{1}{2}k$ having a longest antipath of length $m$. If $m<k$, then $G$ contains an antipath $A = v_0v_1\ldots v_m$ with $v_0v_1 \in E(G)$ such that $v_1$ has at least $\pd(G)-\frac{1}{2}k$ in-neighbours in $V(G)\setminus V(A)$, or $v_{m-1}$ has at least $\pd(G)-\frac{1}{2}k$ out-neighbours in $V(G)\setminus V(A)$.
\end{lemma}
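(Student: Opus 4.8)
The plan is to start from an arbitrary longest antipath $A = v_0v_1\ldots v_m$ and argue that if the conclusion fails then we can always lengthen $A$ or find an anticycle, both contradictions. By Lemma~\ref{lemma1}, since $m < k$ we know $m$ is odd, so both endpoints $v_0$ and $v_m$ have degree $1$ in $A$ and the edges at the two ends point in compatible ways; in fact one of $v_0,v_m$ is a source of $A$ and the other is a sink (this is forced by the alternating pattern on an odd-length path). Relabelling if necessary, I may assume $v_0$ is a source, so $v_0v_1 \in E(G)$, and symmetrically consider the sink end $v_m$, where $v_{m-1}v_m \in E(G)$ as well and $v_{m-1}$ is a source of $A$. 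So the statement really asks: either the (in-)neighbourhood of the ``second vertex'' $v_1$ or the out-neighbourhood of $v_{m-1}$ pokes sufficiently far outside $V(A)$.

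First I would suppose, for contradiction, that for \emph{every} longest antipath $A$ the vertex $v_1$ has fewer than $\pd(G)-\frac12 k$ in-neighbours outside $V(A)$ and $v_{m-1}$ has fewer than $\pd(G)-\frac12 k$ out-neighbours outside $V(A)$. Fix one such $A$. Since $v_1$ has an out-edge to $v_2$ in $G$, it has out-degree $\ge \pd(G)$; the relevant count, though, is its in-degree. If $d^-(v_1)=0$, then $v_1$ is a source, contradicting that $v_1$ is a sink of $A$ (its neighbour $v_0$ is a source), so $d^-(v_1)\ge \pd(G)$. Hence $v_1$ has at least $\pd(G) - (\pd(G)-\tfrac12 k) = \tfrac12 k$ in-neighbours inside $V(A)$. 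I would then examine which vertices of $A$ can be in-neighbours of $v_1$: an in-neighbour $v_j$ must have an out-edge, and the orientation pattern of the antipath forces $v_j$ to be a source of $A$, i.e.\ $j$ even (with $j \ge 2$ since $v_0$ is already the neighbour of $v_1$, and in fact $j\ge 2$, and $j \neq$ the sink end). The key move is rerouting: if $v_j \to v_1$ with $j$ even, then $v_{j-1} v_j$ is an edge with $v_j$ a source, and we can form a new walk by going $v_0 v_1 v_j v_{j-1} v_{j-2}\ldots v_2$ — wait, that reuses $v_1$; instead the standard trick is to use the edge $v_j v_1$ together with the path segment $v_1 v_2 \ldots v_j$ to build an \emph{anticycle} $v_1 v_2 \ldots v_j v_1$ of length $j$ (even, hence a legitimate anticycle), or to reattach the initial segment elsewhere to create a longer antipath. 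More precisely, given the $\ge \tfrac12 k$ in-neighbours $v_{j}$ of $v_1$ on $A$, pick the largest such index $j$; then $v_0 v_1 v_{j} v_{j+1}\ldots v_m$ would be an antipath provided the orientations match, and its length is $m - j + 2$, which is shorter, not longer — so the productive direction is instead to note that the cycle $v_1 v_2\ldots v_{j} v_1$ is an anticycle of length $j < m < k$ to which Lemma~\ref{lemma3} applies, giving an antipath of length $j$, which is not yet a contradiction either. The actual contradiction must come from combining the anticycle with the leftover path $v_{j}\ldots v_m$: an anticycle on $\{v_1,\ldots,v_j\}$ can be ``opened'' at the right vertex and concatenated with $v_{j+1}\ldots v_m$ and with $v_0$ to yield an antipath of length $m+1$; alternatively, if $v_1$ has an in-neighbour $u \notin V(A)$ (few, but the point is the symmetric end), one extends directly. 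I would make this rerouting precise by choosing, among all longest antipaths, one that maximises some secondary parameter (e.g.\ the index of the first in-neighbour of $v_1$ on $A$, or $\sum$ of such indices), so that any reattachment either immediately lengthens the path or, by extremality, is impossible — the classical Pósa-type rotation argument.

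The main obstacle, and the part I expect to require the most care, is exactly this rotation/extension bookkeeping: one has to track the parity of all indices (so that every vertex of the modified path is still a pure source or pure sink), handle the interaction of the two ends $v_1$ and $v_{m-1}$ simultaneously (since the hypothesis we are contradicting is a disjunction, we get to assume \emph{both} ends are ``bad''), and ensure that the $\tfrac12 k$ internal in-neighbours of $v_1$ together with the $\tfrac12 k$ internal out-neighbours of $v_{m-1}$ cannot all be packed onto a path of length $m$ without forcing either a longer antipath or an anticycle of length $m+1$ — at which point Lemma~\ref{lemma3}, together with Lemma~\ref{lemma1} (which says a longest antipath shorter than $k$ has odd length, so length $m+1$ is even and genuinely could be an anticycle length), closes the argument. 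In short: assume the conclusion fails for all longest antipaths, deduce $v_1$ has $\ge\tfrac12 k$ in-neighbours on $A$ and $v_{m-1}$ has $\ge\tfrac12 k$ out-neighbours on $A$, run the rotation argument to produce either an antipath of length $m+1$ (contradicting maximality) or an anticycle of length $m+1$ (then apply Lemma~\ref{lemma3} to get an antipath of length $m+1$, again a contradiction).
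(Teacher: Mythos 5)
Your proposal correctly identifies the right family of tools (Lemma~\ref{lemma1} for parity, P\'osa-type rotations, Lemma~\ref{lemma3} for anticycles), but it stops at the level of a plan rather than a proof, and it contains one concrete error along the way. You assert that an in-neighbour $v_j$ of $v_1$ lying on $A$ ``must be a source of $A$, i.e.\ $j$ even.'' This is false: the antipath condition constrains only the edges of $A$, not chords or edges of $G$ incident to $V(A)$, so a sink $v_j$ of $A$ (odd $j$) is perfectly free to send an edge $v_j v_1$ in $G$. Consequently your count of ``$\ge \tfrac12 k$ in-neighbours on $A$, all at even indices'' does not follow, and since $|E| = |O| = \tfrac{m+1}{2}$ can equal $\tfrac{k}{2}$, even the corrected count gives no immediate overflow. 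The rest of the argument (``open the anticycle at the right vertex and concatenate,'' ``choose a secondary parameter to maximise'') is left unexecuted, and the one concrete attempt you write down you yourself flag as broken (``wait, that reuses $v_1$'').

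What the paper actually does is sharper and avoids these issues. After fixing a longest antipath $A$ and assuming $v_0 v_1 \in E(G)$, it defines the two rotation sets $S\subseteq O$ (possible second vertices) and $P\subseteq E$ (possible penultimate vertices) and proves, by a short double-count, that there must be an edge from $v_0$ into $P$ or from $S$ into $v_m$: otherwise $v_0$'s $\ge \pd(G)$ out-edges, forced into $V(A)\setminus P$, push $|S|$ strictly above $|P|$, while the symmetric argument at $v_m$ pushes $|P|$ strictly above $|S|$. After rotating so that $v_1 v_m$ is an edge, $v_1$ has \emph{both} in- and out-degree positive, hence $\ge 2\pd(G)-m+1$ neighbours in $V(G)\setminus\{v_2,\dots,v_m\}$, and a pigeonhole split on in/out gives the desired $\pd(G)-\tfrac12 k$ neighbours, directly (in-neighbour case) or after one more flip of the path (out-neighbour case). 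This chord-plus-degree-boost step, and the $|S|$-versus-$|P|$ counting that produces the chord, are the two ideas your sketch is missing, and without them the contradiction you are aiming for does not materialise.
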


\begin{proof}
By Lemma~\ref{lemma1} $m$ is odd. Let $A = v_0v_1\ldots v_m$ be a longest antipath in $G$. By symmetry, we may assume that $v_0v_1 \in E(G)$. Denote by $E=\{v_0,v_2,\ldots,v_{m-1}\}$ the set of vertices in $V(A)$ with even index and by $O=\{v_1,v_3,\ldots,v_m\}$ the set of vertices in $V(A)$ with odd index. Note that every edge of $A$ is directed from $E$ to $O$. 

If $v_0 v_{2i+1} \in E(G)$ for some positive $i < \frac{1}{2}m$, then $v_{2i} v_{2i-1}\ldots v_0 v_{2i+1} v_{2i+2}\ldots v_m$ is an antipath on all vertices of $A$ with all edges directed from $E$ to $O$. Similarly, if $v_{2i} v_{m} \in E(G)$, then also $v_0v_1\ldots v_{2i}v_mv_{m-1}\ldots v_{2i+1}$ is such an antipath. Let $S \subseteq O$ and $P \subseteq E$ be the sets of vertices that can be the second and the penultimate vertices in some antipath on all vertices of $A$ that can be obtained from $A$ by the above operations. In particular, $v_1 \in S$ and $v_{m-1} \in P$. 

\begin{claim}\label{cla:FPlubSL}
There is an edge from $v_0$ to $P$ or from $S$ to $v_m$. 
\end{claim}

\begin{proof}
For the sake of contradiction assume that there are no such edges. This means that vertex $v_0$ has at most $\frac{m-1}{2} - |P|$ out-neighbours in $E$, so at least $\pd(G) - \frac{m-1}{2} + |P|$ out-neighbours in $O$. If $v_{2i+1}$ is an out-neighbour of $v_0$ in $O$ for $i > 0$, then $v_{2i-1} \in S$. Therefore, $|S| \ge \pd(G) - \frac{m-1}{2} - 1 + |P| > |P|$. A symmetric argument from the perspective of $v_m$ leads to $|P| > |S|$, which gives a contradiction. 
\end{proof}

By Claim~\ref{cla:FPlubSL} and symmetry, we may assume that $G$ contains an edge $v_1v_m$. This means that vertex $v_1$ has simultaneously positive in-degree and out-degree. In particular, it has at least $2\pd(G)-m+1 \ge 2\pd(G)-k+2$ neighbours in the set $R = V(G) \setminus \{v_2,v_3,\ldots, v_m\}$. Thus, at least  $\pd(G)-\frac{1}{2}k+1$ in-neighbours or at least  $\pd(G)-\frac{1}{2}k+1$ out-neighbours in~$R$. 
In the former case, $A$ satisfies the conclusion of the lemma. 
In the later case, note that each out-neighbour $w \in R$ of $v_1$ creates an antipath $v_2v_3\ldots v_mv_1w$ of length $m$ with $v_1$ as the penultimate vertex having at least $\pd(G)-\frac{1}{2}k$ out-neighbours outside this antipath (see Figure~\ref{f1}), so the conclusion of the lemma also holds.
\end{proof}

\begin{figure}[ht]
\centering
\begin{tikzpicture}[x=10cm,y=10cm] 
  \tikzset{     
    e4c node/.style={circle,fill=black,minimum size=0.2cm,inner sep=0}, 
    e4c edge/.style={line width=1.5pt}
  }
  \node[e4c node] (f) at (0.00, 0.80) {}; 
  \node[anchor= north] at (0.00, 0.80) {$v_0$};
  \node[e4c node] (l) at (0.00, 0.65) {}; 
  \node[anchor= north] at (0.00, 0.65) {$w$};
  \node[e4c node] (v1) at (0.15, 0.80) {}; 
  \node[anchor= north west] at (0.15, 0.80) {$v_1$};
  \node[e4c node] (v2) at (0.30, 0.80) {};
  \node[anchor= north] at (0.30, 0.80) {$v_2$};
  \node[e4c node] (v3) at (0.45, 0.80) {}; 
  \node[anchor= north] at (0.45, 0.80) {$v_3$};
  \node[e4c node] (vm-1) at (0.85, 0.80) {}; 
  \node[anchor= north] at (0.85, 0.80) {$v_{m-1}$};
  \node[e4c node] (vm) at (1.00, 0.80) {}; 
  \node[anchor= north] at (1.00, 0.80) {$v_m$};

  \path[->,draw, line width=2pt, >=stealth, shorten >=1pt]
  (f) edge[e4c edge]  (v1)
  (v2) edge[e4c edge]  (v1)
  (v2) edge[e4c edge]  (v3)
  (vm-1) edge[e4c edge]  (vm)
  (v1) edge[e4c edge]  (l)
  ;
  
  \path[->,draw,dotted,line width=2pt, >=stealth, shorten >=1pt]
  (vm-1) edge[e4c edge] (v3)
  ;
  
  \path[->,draw,thick, bend left, >=stealth,shorten >=1pt]
  (v1) edge[e4c edge]  (vm)
  ;
\end{tikzpicture}

\caption{Two types of antipaths created in Lemma~\ref{lem:Fsize}.}\label{f1}
\end{figure}
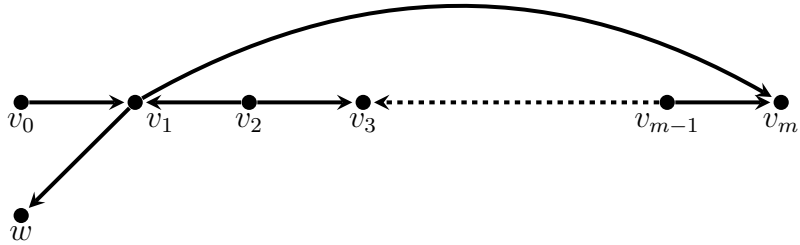

\section{Main proof}

\begin{proof}[Proof of Theorem~\ref{maintheorem}]
Consider any integer $k \ge 4$ and an oriented graph $G$ satisfying $\pd(G) > \frac{1}{2}(k-1+\sqrt{k-3})$ not containing an antipath of length $k$. By Lemma~\ref{lem:Fsize} and symmetry, let $A = v_0v_1v_2\ldots v_m$ be a longest antipath in $G$ with $v_0v_1 \in E(G)$ and $v_1$ having at least $\pd(G)-\frac{1}{2}k$ in-neighbours in $V(G)\setminus V(A)$.  

Let $F$ be the set of at least $\pd(G)-\frac{1}{2}k+1 \geq 2$ in-neighbours of $v_1$ that are not in the set $\{v_2, v_3, \ldots, v_m\}$. Note that each vertex $w$ in $F$ is the first vertex on an antipath $wv_1v_2\ldots v_m$ of length $m$, in particular $v_0 \in F$. 

\begin{claim}\label{cla:forbiddenpairs}
For any different vertices $v, w \in F$ and a positive integer $i < \frac{1}{2}m$, if $vv_{2i} \in E(G)$ and $wv_{2i+1} \in E(G)$, then $wv_{2i-1} \not\in E(G)$ and $wv_{2i} \not\in E(G)$.
\end{claim}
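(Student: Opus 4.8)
The proof is by contradiction: I would assume all four edges are present and try to build an antipath of length $k$ — or, failing that, an antipath longer than $m$ or an anticycle on all of $V(A)$ of length $\ge m+1$ contradicting maximality (possibly via Lemma~\ref{lemma3}). The basic idea is that $v$ and $w$ sit in $F$, so each of them together with $v_1$ extends $A$ at the front; and we are told $v$ reaches the even vertex $v_{2i}$, while $w$ reaches two more vertices, namely $v_{2i-1}$ and/or $v_{2i}$. Since all internal edges of $A$ point from $E=\{v_0,v_2,\ldots,v_{m-1}\}$ to $O=\{v_1,v_3,\ldots,v_m\}$, the directions here are consistent with antipaths: $vv_{2i}$ and $wv_{2i+1}$ and $wv_{2i-1}$ are all edges of the form ``even-out, odd-in'' type $E\to O$, while $wv_{2i}$ is an edge from $w$ (an ``odd-type'' vertex, since $wv_1$ is an out-edge of $w$) into $v_{2i}\in E$, so it reverses orientation.

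**Key steps.** First I would record the ``rotation'' picture: walking $A$ from $v_0$ to $v_m$, cutting at position $2i$ and splicing in $v$ or $w$ through $v_1$ gives alternative antipaths. Concretely, $w v_1 v_2 \ldots v_{2i}$ is an antipath of length $2i$, and using $wv_{2i+1}\in E(G)$ the segment $w v_{2i+1} v_{2i+2} \ldots v_m$ is an antipath; the tricky part is gluing a front piece ending at a chosen vertex of $A$ with small index to one of these. Second, using the other in-neighbour $v$ with $vv_{2i}\in E(G)$: the segment $v v_{2i} v_{2i-1} \ldots v_1 v_0$ walks backwards along $A$ (all edges now read $E\to O$ going the right way since we reversed direction of travel) and has $v_0\in F$ at the end. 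Third, I would combine: an antipath that starts $v_m v_{m-1} \ldots v_{2i+1} w v_1 v_2 \ldots v_{2i} v$ uses $w v_{2i+1}$, $w v_1$, and $v v_{2i}$, covers all of $V(A)$ plus the two new vertices $v,w$, and has length $m+2 > m$ — provided this walk is actually a path (no repeated vertex) and actually an antipath (alternating at every vertex). Checking alternation at $w$ requires $w v_{2i+1}$ and $w v_1$ to be both out-edges of $w$, which holds; checking it at $v_{2i}$ requires $v v_{2i}$ and $v_{2i} v_{2i-1}$ — the former is into $v_{2i}$ and the latter is... out of $v_{2i}$, contradiction! So that particular splice fails at $v_{2i}$, and that is exactly why the hypothesis $vv_{2i}\in E(G)$ must be paired with a forbidden conclusion about $w$ rather than $v$.

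**The real mechanism, and the main obstacle.** The point of the claim is subtler: the two forbidden edges $wv_{2i-1}$ and $wv_{2i}$ are the ones that, combined with $vv_{2i}$ and $wv_{2i+1}$, would let one reroute $A$ into a strictly longer antipath. If $wv_{2i}\in E(G)$, then since $wv_1\in E(G)$ as well, $w$ has two out-edges landing on $A$; I would build $v_{2i-1}v_{2i-2}\ldots v_1 w v_{2i} ? $ — better, use $v v_{2i} v_{2i-1} \ldots v_1 w$ has length $2i+1$ but reuses nothing of $v_{2i+1},\ldots,v_m$; then append nothing at $w$ unless $w$ has a free in-edge. Hmm — the actual winning configuration is likely: reverse the front of $A$ at $v_{2i}$ via $v$, land back at $v_0\in F$, go $v_0 v_1$, then jump $v_1$ to... no, $v_1$ is used. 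The cleanest route, which I would chase, is to produce an \emph{anticycle} through $V(A)$: the edges $wv_1$, $wv_{2i+1}$ let us close a cycle $v_1 v_2 \ldots v_{2i} \,?\, \ldots$; and then invoke Lemma~\ref{lemma3} to convert a long anticycle into a long antipath. I expect the main obstacle to be precisely the bookkeeping of which vertices are consumed in each rerouting and verifying that the resulting structure alternates at the two ``splice'' vertices and does not secretly repeat a vertex or come out one edge too short — i.e. showing the reroute genuinely gains length or genuinely closes into an anticycle of length $\ge m+1$. Once a concrete such structure is exhibited for each of the two cases $wv_{2i-1}\in E(G)$ and $wv_{2i}\in E(G)$, the contradiction with $A$ being longest (together with Lemma~\ref{lemma3} in the cycle case) closes the proof.
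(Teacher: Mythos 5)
Your proposal never actually produces the constructions that prove the claim, so it has a genuine gap at its core. You correctly set up the ``rotation'' picture, you correctly identify that the goal is to reroute $A$ into a strictly longer antipath, and you correctly notice that the naive splice $v_m\ldots v_{2i+1}\,w\,v_1\ldots v_{2i}\,v$ breaks alternation at $v_{2i}$. But after discarding that, you only speculate (``the cleanest route, which I would chase, is to produce an anticycle through $V(A)$\ldots'') and you close by saying ``Once a concrete such structure is exhibited\ldots the contradiction\ldots closes the proof'' --- which is precisely the step that constitutes the proof and is never done. A reader cannot extract from your text the actual longer antipath in either case.

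The missing content is short. In both cases one \emph{drops} $v_0$ and inserts both $v$ and $w$, yielding a path on $m+2$ vertices, i.e.\ of length $m+1$: if $wv_{2i-1}\in E(G)$, take
$v_{2i}\,v\,v_1 v_2\ldots v_{2i-1}\,w\,v_{2i+1}\ldots v_m$;
if $wv_{2i}\in E(G)$, take
$v_{2i-1}v_{2i-2}\ldots v_1\,v\,v_{2i}\,w\,v_{2i+1}\ldots v_m$.
In each, alternation holds at $v$ (both $vv_1$ and $vv_{2i}$ are out-edges of $v$), at $w$ (both $wv_{2i\pm 1}$ or $wv_{2i}$ and $wv_{2i+1}$ are out-edges of $w$), and at every $v_j$ because all edges used at $v_j$ are of the form $E\to O$ just as in $A$. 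No anticycle and no appeal to Lemma~\ref{lemma3} is needed here; that detour, which you gesture at, is a red herring for this particular claim. Also note a small framing issue: you say you would ``assume all four edges are present,'' but the conclusion is a conjunction of two negations, so the contradiction hypothesis is that at least one of $wv_{2i-1}$, $wv_{2i}$ is an edge, and the two cases are handled separately as above.
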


\begin{proof}
The claim follows from the maximality of the antipath $A$. 
If $wv_{2i-1} \in E(G)$, then $v_{2i}vv_1v_2\ldots v_{2i-1}wv_{2i+1}v_{2i+2}\ldots v_m$ is a longer antipath (see Figure \ref{figure2}), while if $wv_{2i} \in E(G)$, then $v_{2i-1}v_{2i-2}\ldots v_1 vv_{2i}wv_{2i+1}v_{2i+2}\ldots v_m$ is a longer antipath (see Figure \ref{figure3}).
\end{proof}

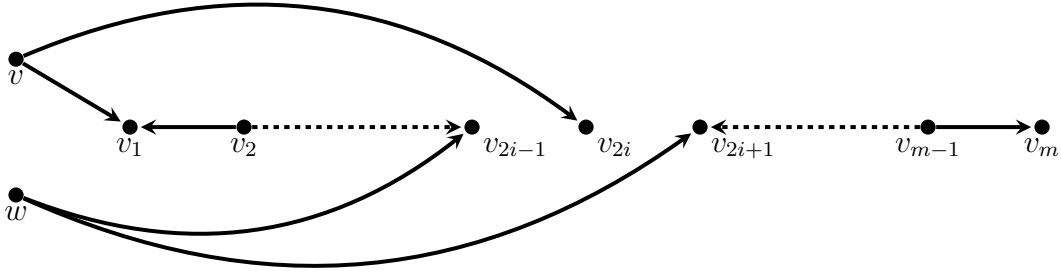
\begin{figure}[h]
\centering
\begin{tikzpicture}[x=10cm,y=9cm] % change these values to adjust the size of a figure
  \tikzset{     
    e4c node/.style={circle,fill=black,minimum size=0.2cm,inner sep=0}, 
    e4c edge/.style={line width=1.5pt}
  }
  \node[e4c node] (f1) at (0.00, 0.90) {}; 
  \node[anchor= north] at (0.00, 0.90) {$v$};
  \node[e4c node] (f2) at (0.00, 0.70) {}; 
  \node[anchor= north] at (0.00, 0.70) {$w$};
  \node[e4c node] (v1) at (0.15, 0.80) {}; 
  \node[anchor= north] at (0.15, 0.80) {$v_1$};
  \node[e4c node] (v2) at (0.30, 0.80) {};
  \node[anchor= north] at (0.30, 0.80) {$v_2$};
  %\node[e4c node] (v3) at (0.45, 0.80) {}; 
  %\node[anchor= north] at (0.45, 0.80) {$v_3$};
  \node[e4c node] (vm-1) at (1.20, 0.80) {}; 
  \node[anchor= north] at (1.20, 0.80) {$v_{m-1}$};
  \node[e4c node] (vm) at (1.35, 0.80) {}; 
  \node[anchor= north] at (1.35, 0.80) {$v_m$};
  
   \node[e4c node] (v2i-1) at (0.60, 0.80) {}; 
  \node[anchor= north west] at (0.60, 0.80) {$v_{2i-1}$};
  \node[e4c node] (v2i) at (0.75, 0.80) {}; 
  \node[anchor= north west] at (0.75, 0.80) {$v_{2i}$};
  \node[e4c node] (v2i+1) at (0.90, 0.80) {}; 
  \node[anchor= north west] at (0.90, 0.80) {$v_{2i+1}$};
  %\node[e4c node] (v2i+2) at (1.05, 0.80) {}; 
  %\node[anchor= north] at (1.05, 0.80) {$v_{2i+2}$};

  \path[->,draw, line width=2pt, >=stealth, shorten >=1pt]
  (f1) edge[e4c edge]  (v1)
  (v2) edge[e4c edge]  (v1)
  %(v2i) edge[e4c edge]  (v2i-1)
  %(v2i) edge[e4c edge]  (v2i+1)
  (vm-1) edge[e4c edge]  (vm)
  %(f2) edge[e4c edge]  (v1)
  ;
  
  \path[->,draw,dotted,line width=2pt, >=stealth, shorten >=1pt]
  (v2) edge[e4c edge] (v2i-1)
  (vm-1) edge[e4c edge] (v2i+1)
  ;
  
 \path[->,draw,thick, bend left, >=stealth,shorten >=1pt]
(f1) edge[e4c edge]  (v2i)
%(f1) edge[e4c edge]  (v2i+1)
  ;
  
  \path[->,draw,thick, bend right, >=stealth,shorten >=1pt]
(f2) edge[e4c edge]  (v2i-1)
(f2) edge[e4c edge]  (v2i+1)
;
\end{tikzpicture}

\caption{The antipath of length $m+1$ if $wv_{2i-1} \in E(G)$.} \label{figure2}
\end{figure}

\begin{figure}[h]
\centering
\begin{tikzpicture}[x=10cm,y=9cm] % change these values to adjust the size of a figure
  \tikzset{     
    e4c node/.style={circle,fill=black,minimum size=0.2cm,inner sep=0}, 
    e4c edge/.style={line width=1.5pt}
  }
  \node[e4c node] (f1) at (0.00, 0.90) {}; 
  \node[anchor= north] at (0.00, 0.90) {$v$};
  \node[e4c node] (f2) at (0.00, 0.70) {}; 
  \node[anchor= north] at (0.00, 0.70) {$w$};
  \node[e4c node] (v1) at (0.15, 0.80) {}; 
  \node[anchor= north] at (0.15, 0.80) {$v_1$};
  \node[e4c node] (v2) at (0.30, 0.80) {};
  \node[anchor= north] at (0.30, 0.80) {$v_2$};
  %\node[e4c node] (v3) at (0.45, 0.80) {}; 
  %\node[anchor= north] at (0.45, 0.80) {$v_3$};
  \node[e4c node] (vm-1) at (1.20, 0.80) {}; 
  \node[anchor= north] at (1.20, 0.80) {$v_{m-1}$};
  \node[e4c node] (vm) at (1.35, 0.80) {}; 
  \node[anchor= north] at (1.35, 0.80) {$v_m$};
  
   \node[e4c node] (v2i-1) at (0.60, 0.80) {}; 
  \node[anchor= north] at (0.60, 0.80) {$v_{2i-1}$};
  \node[e4c node] (v2i) at (0.75, 0.80) {}; 
  \node[anchor= north west] at (0.75, 0.80) {$v_{2i}$};
  \node[e4c node] (v2i+1) at (0.90, 0.80) {}; 
  \node[anchor= north west] at (0.90, 0.80) {$v_{2i+1}$};
  %\node[e4c node] (v2i+2) at (1.05, 0.80) {}; 
  %\node[anchor= north] at (1.05, 0.80) {$v_{2i+2}$};
  
  \path[->,draw, line width=2pt, >=stealth, shorten >=1pt]
  (f1) edge[e4c edge]  (v1)
  (v2) edge[e4c edge]  (v1)
  %(v2i) edge[e4c edge]  (v2i-1)
  %(v2i) edge[e4c edge]  (v2i+1)
  (vm-1) edge[e4c edge]  (vm)
  %(f2) edge[e4c edge]  (v1)
  ;
  
  \path[->,draw,dotted,line width=2pt, >=stealth, shorten >=1pt]
  (v2) edge[e4c edge] (v2i-1)
  (vm-1) edge[e4c edge] (v2i+1)
  ;
  
 \path[->,draw,thick, bend left, >=stealth,shorten >=1pt]
(f1) edge[e4c edge]  (v2i)
%(f1) edge[e4c edge]  (v2i+1)
  ;
  
  \path[->,draw,thick, bend right, >=stealth,shorten >=1pt]
(f2) edge[e4c edge]  (v2i)
(f2) edge[e4c edge]  (v2i+1)
;
\end{tikzpicture}

\caption{The antipath of length $m+1$ if $wv_{2i} \in E(G)$.} \label{figure3}
\end{figure}
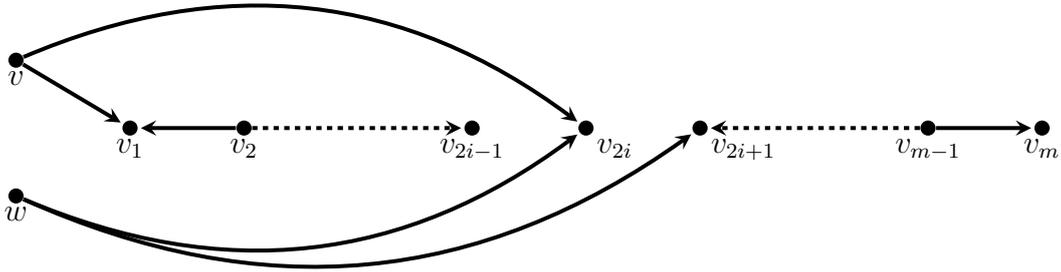

\begin{claim}\label{cla:edgesfromF}
For every integer $i \leq \frac{m-3}{4}$ there are at most $2|F|+1$ edges from $F$ to the set $\{v_{4i}, v_{4i+1}, v_{4i+2}, v_{4i+3}\}$, and at most $2|F|$ edges from $F$ to the set $\{v_{0}, v_{1}, v_{2}, v_{3}\}$.
\end{claim}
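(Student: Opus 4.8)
The plan is the following. For every index $j$ set $N_j:=\{w\in F:\ wv_j\in E(G)\}$, so that the number of edges from $F$ to a four-element set $\{v_a,v_{a+1},v_{a+2},v_{a+3}\}$ equals $|N_a|+|N_{a+1}|+|N_{a+2}|+|N_{a+3}|$, each summand being at most $|F|$; note also that $N_1=F$ by the definition of $F$. I would bound the contributions of the two consecutive pairs $(v_{4i},v_{4i+1})$ and $(v_{4i+2},v_{4i+3})$ separately, via the basic estimate: for $0<t<\frac{m}{2}$, if $|N_{2t}|\ge 2$ then Claim~\ref{cla:forbiddenpairs} gives $N_{2t}\cap N_{2t+1}=\emptyset$ (for any $x\in N_{2t}\cap N_{2t+1}$, pick $v\in N_{2t}\setminus\{x\}$ and apply the claim to the distinct vertices $v$ and $x$), so $|N_{2t}|+|N_{2t+1}|\le|F|$, while if $|N_{2t}|\le 1$ then trivially $|N_{2t}|+|N_{2t+1}|\le|F|+1$.

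For a block $\{v_{4i},\dots,v_{4i+3}\}$ with $i\ge 1$ the indices $t=2i$ and $t=2i+1$ are admissible (as $4i+3\le m$), so applying the basic estimate to both pairs: if $|N_{4i}|\ge 2$ or $|N_{4i+2}|\ge 2$, then at least one pair contributes at most $|F|$ and the other at most $|F|+1$, for a total of at most $2|F|+1$. In the remaining case $|N_{4i}|\le 1$ and $|N_{4i+2}|\le 1$, the naive bound is only $2|F|+2$, and I would save the last unit as follows: either $|N_{4i+1}|+|N_{4i+3}|\le 2|F|-1$ and we are done, or $N_{4i+1}=N_{4i+3}=F$, in which case for any $x\in N_{4i+2}$ one picks $w\in F\setminus\{x\}\subseteq N_{4i+3}$ (possible since $|F|\ge 2$) and applies Claim~\ref{cla:forbiddenpairs} to the pair $(v_{4i+2},v_{4i+3})$ to obtain $wv_{4i+1}\notin E(G)$, contradicting $N_{4i+1}=F$; hence $N_{4i+2}=\emptyset$ and the block carries at most $1+2|F|$ edges.

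For the block $\{v_0,v_1,v_2,v_3\}$ (with $m\ge 3$) I would gain the extra unit from two observations. First, $N_0=\emptyset$: if some $w\in F$ had $wv_0\in E(G)$, then $w\ne v_0$ and $w\notin V(A)$, and since also $wv_1\in E(G)$, the sequence $v_0\,w\,v_1\,v_2\cdots v_m$ --- in which $w$ is a source joined to $v_0$ and $v_1$ and every other edge is as in $A$ --- is an antipath of length $m+1$, contradicting the maximality of $A$. Second, Claim~\ref{cla:forbiddenpairs} applied to the pair $(v_2,v_3)$ together with the fact that every $w\in F$ has $wv_1\in E(G)$ forbids distinct vertices $v\in N_2$ and $w\in N_3$; hence $N_2=\emptyset$, or $N_3=\emptyset$, or $N_2=N_3$ is a singleton, so in every case $|N_2|+|N_3|\le|F|$ (using $|F|\ge 2$). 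Together with $N_0=\emptyset$ and $|N_1|=|F|$ this yields at most $2|F|$ edges.

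I expect the only real difficulty to be the bookkeeping of the additive constant: applying Claim~\ref{cla:forbiddenpairs} bluntly to the two pairs of a block only gives $2|F|+2$, and the argument depends on the two refinements above --- that $N_{4i+1}=N_{4i+3}=F$ forces $N_{4i+2}=\emptyset$, and that $v_0$ has no in-neighbour in $F$ --- to remove the final edge. Both refinements require checking that the vertices passed to Claim~\ref{cla:forbiddenpairs} are genuinely distinct, which is exactly where the hypothesis $|F|\ge 2$ enters.
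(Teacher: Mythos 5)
Your proof is correct. The key lemma (Claim~\ref{cla:forbiddenpairs}) is the same, but the bookkeeping is organised differently: you count ``by column'' via the sets $N_j=\{w\in F:\ wv_j\in E(G)\}$ and bound the two pairs $\{v_{4i},v_{4i+1}\}$, $\{v_{4i+2},v_{4i+3}\}$ separately (the basic estimate $|N_{2t}|+|N_{2t+1}|\le|F|$ whenever $|N_{2t}|\ge 2$ being the workhorse), whereas the paper counts ``by row'', classifying vertices of $F$ by how many out\-neighbours they have in the block $Q$ and splitting into the cases ``some $v$ has out\-degree $4$ in $Q$'', ``two distinct vertices have out\-degree $3$'', and ``at most one vertex has out\-degree $3$''. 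Your version is arguably cleaner: the only delicate point is shaving the last unit off the na\"{\i}ve $2|F|+2$ bound, which you handle with two nice refinements (showing $N_{4i+1}=N_{4i+3}=F$ forces $N_{4i+2}=\emptyset$ for $i\ge 1$, and $N_0=\emptyset$ plus the singleton structure of $N_2,N_3$ for $i=0$); the paper achieves the same saving by a more direct degree\-count case analysis. In both arguments the hypothesis $|F|\ge 2$ is essential in exactly the places you flag. All applications of Claim~\ref{cla:forbiddenpairs} use admissible indices ($0<i_{\mathrm{claim}}<\frac{m}{2}$), so the ranges check out.
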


\begin{proof}
Let $Q = \{v_{4i}, v_{4i+1}, v_{4i+2}, v_{4i+3}\}$. If there is a vertex in $F$ with $4$ out-neighbours in $Q$, then by Claim~\ref{cla:forbiddenpairs} any other vertex in $F$ does not have an edge to $v_{4i}$, $v_{4i+2}$ and simultaneously to $v_{4i+1}$ and $v_{4i+3}$, so there are at most $|F|+3 \leq 2|F|+1$ edges from $F$ to~$Q$. If two different vertices $v, w \in F$ have $3$ out-neighbours in $Q$, then by Claim~\ref{cla:forbiddenpairs} either $vv_{4i+2} \not\in E(G)$ or $wv_{4i+3} \not\in E(G)$, similarly, either $wv_{4i+2} \not\in E(G)$ or $vv_{4i+3} \not\in E(G)$. This implies that  $v$ and $w$ have edges to both $v_{4i}$ and $v_{4i+1}$, which also contradicts  Claim~\ref{cla:forbiddenpairs}. Therefore, there is at most $1$ vertex in $F$ with $3$ out-neighbours in $Q$ and all the other vertices in $F$ have at most $2$ out-neighbours in $Q$. This implies that there are at most $2|F|+1$ edges from $F$ to $Q$.

To prove the second part of the claim, note that $v_0$ is not an out-neighbour of any vertex in $F$ as otherwise it creates an antipath of length $m+1$ contradicting the maximality of $m$. If there is a vertex in $F$ with edges to both $v_{2}$ and $v_{3}$, then by Claim~\ref{cla:forbiddenpairs} any other vertex in $F$ has at most 1 out-neighbour in the set $\{v_{0}, v_{1}, v_{2}, v_{3}\}$, so there are at most $|F|+2 \leq 2|F|$ edges from $F$ to $\{v_{0}, v_{1}, v_{2}, v_{3}\}$. Otherwise, each vertex in $F$ has at most $2$ out-neighbours in the set $\{v_{0}, v_{1}, v_{2}, v_{3}\}$ as wanted. 
\end{proof}

Note that there are no edges from $F$ to $v_m$ as otherwise it creates an anticycle of length $m+1$ and by Lemma~\ref{lemma3} we obtain a contradiction with the maximality of $m$. Hence, there are at most $|F|$ edges from $F$ to the set $\{v_{m-1},v_m\}$. 

Therefore, Claim~\ref{cla:edgesfromF} implies that if $m \equiv 3$ mod $4$, then there are at most
\[\frac{m-3}{4}(2|F| + 1) + 2|F| \leq \frac{k}{2}|F| + \frac{k-4}{4}\]
edges from $F$ to $V(A)$, while if $m \equiv 1$ mod $4$, then there are at most 
\[\frac{m-5}{4}(2|F| + 1) + 2|F| + |F| \leq \frac{k}{2}|F| + \frac{k-6}{4} \leq \frac{k}{2}|F| + \frac{k-4}{4}\]
edges from $F$ to $V(A)$. 
On the other hand, since all out-neighbours of the vertices in $F$ are in $V(A)$ as otherwise we have a longer antipath in $G$, there are at least $\pd(G) |F|$ edges from $F$ to $V(A)$. This means that 
\begin{align*}
\frac{k-4}{4} &\geq \left(\pd(G) - \frac{k}{2}\right)|F| \geq \left(\pd(G) - \frac{k}{2}\right)\left(\pd(G)-\frac{k}{2}+1\right) \\
&> \frac{\sqrt{k-3}-1}{2}\cdot\frac{\sqrt{k-3}+1}{2} = \frac{k-4}{4},
\end{align*}
which is a contradiction concluding the proof of Theorem~\ref{maintheorem}.
\end{proof}

\section{Concluding remarks}

The minimum pseudo-semidegree bound in Theorem~\ref{maintheorem} can be further improved by lowering the constant by the $\sqrt{k}$ term. This can be achieved using a more detailed analysis on the structure of edges between $F$ and $A$ and by improving the bound of the size of the set $F$ obtained in Lemma~\ref{lem:Fsize}.

We remark that applying Lemma~\ref{lemma1} and considering proper rounding to the nearest integer in Theorem~\ref{maintheorem} gives that Conjecture~\ref{pseudostein} is true for $k \leq 11$. 

\bibliographystyle{plain}
\bibliography{grafy}

\end{document}